\newtheorem*{questionM}{\textbf{Problem C}}
\newtheorem*{questionA}{\textbf{Problem C in $\Z\{z\}$}}
\newtheorem{theorem}{\textbf{Theorem}}
\newtheorem{fact}{\textbf{Fact}}
\newtheorem{corollary}{\textbf{Corollary}}
\newtheorem{remark}{\textbf{Remark}}
\newtheorem{lemma}{\textbf{Lemma}}
\def\a {\alpha}
\def\aa {\overline{\alpha}}
\def\b {\beta}
\def\N {\mathbb{N}}
\def\Z {\mathbb{Z}}
\def\QQQ {\QQ\cap B(0,1)}
\def\Q {\mathbb{Q}}
\def\R {\mathbb{R}}
\def\ZZ {\Z\{z\}}
\def\g {\gamma}
\def\QQ {\overline{\Q}}
\def\C {\mathbb{C}}
\theoremstyle{remark}
\numberwithin{equation}{section}
\begin{document}

\title[A QUESTION PROPOSED BY K. MAHLER ON EXCEPTIONAL SETS]{ON EXCEPTIONAL SETS OF TRANSCENDENTAL FUNCTIONS WITH INTEGER COEFFICIENTS: SOLUTION OF A MAHLER'S PROBLEM}

\author{DIEGO MARQUES}
\address{DEPARTAMENTO DE MATEM\'{A}TICA, UNIVERSIDADE DE BRAS\'ILIA, BRAS\'ILIA, DF, BRAZIL}
\email{diego@mat.unb.br}


\author{CARLOS GUSTAVO MOREIRA}
\address{INSTITUTO DE MATEM\' ATICA PURA E APLICADA, RIO DE JANEIRO, RJ, BRAZIL}
\email{gugu@impa.br}

\subjclass[2010]{Primary 11Jxx, Secondary 30Dxx}

\keywords{Mahler problem, transcendental function, Liouville's inequality}

\begin{abstract}
In this paper, we shall prove that any subset of
$\overline{\Q}\cap B(0,1)$, which is closed under complex conjugation and which contains the element $0$, is the exceptional set of uncountably many transcendental functions, analytic in the unit ball, with integer coefficients. This solves a strong version of an old question proposed by K. Mahler (1976).
\end{abstract}

\maketitle

\section{Introduction}

A \textit{transcendental function} is a function $f(x)$ such that the only complex polynomial satisfying $P(x, f(x)) =0$, for all $x$ in its domain, is the null polynomial. For instance, the trigonometric functions, the exponential function, and their inverses. 

The most interesting classes of numbers for which transcendence has been proved are given as the values of suitable analytic transcendental functions. These functions, in many cases, are defined as power series with integral or rational or algebraic coefficients. Weierstrass initiated the question of investigating the set of algebraic
numbers where a given transcendental entire function $f$ takes algebraic values. Denote by $\QQ$ the field of algebraic numbers. For an entire function $f$, we define the exceptional set $S_f$ of $f$ as 
\[
S_f=\{\alpha\in \QQ\ :\ f(\alpha)\in \QQ\}.
\]

For instance, the Hermite-Lindemann theorem implies that if $S\subseteq \QQ$ is finite then the exceptional set of $\exp(\prod_{\alpha\in S}(z-\alpha))$ is $S$. The exceptional sets of the functions $2^z$ and $e^{z\pi +1}$ are $\Q$ and $\emptyset$, respectively, as shown by the Gelfond-Schneider theorem and Baker's theorem. Also, assuming Schanuel's conjecture, we obtain that the exceptional sets of $2^{2^z}$ and $2^{2^{2^{z-1}}}$ are $\Z$ and $\Z_{>0}$, respectively.

The study of exceptional sets started in 1886 with a letter from Weierstrass to
Strauss. In this letter, Weierstrass conjectured about the existence of a transcendental entire function whose exceptional set is $\QQ$. This assertion was proved in 1895 by St\"{a}ckel \cite{19} who established a much more general result: \textit{for each countable subset $\Sigma\subseteq \C$ and each dense subset $T\subseteq \C$, there exists a transcendental entire
function $f$ such that $f(\Sigma) \subseteq T$} (Weierstrass assertion is obtained by choosing $\Sigma=T=\QQ$). Since that time, many mathematicians have posed conjectures on values of transcendental functions at algebraic points. Surprisingly, most of these conjectures turned out to be wrong and they were studied by mathematicians like Weierstrass, Strauss, St\"{a}ckel, Faber, Hurwitz, Gelfond, Lekkerkerker, Mahler, Waldschmidt and many others. We still remark that the Hilbert's seventh problem include the following comment (as can be see in \cite[Chap. 6]{tubbs}):
\begin{center}
\begin{minipage}{10 cm}
\textit{We expect transcendental functions to assume, in general, transcendental values for $[\ldots]$ algebraic arguments.}
\end{minipage}

\end{center}

The question of the possible sets $S_f$ has been solved in \cite{diego}: \textit{any subset of algebraic numbers is the exceptional set of some transcendental entire function}. However, none information about the arithmetic nature of the coefficients of the Taylor series of $f$ is obtained in that construction.

We point out that in one of his books, Mahler \cite[Chap. 3]{bookmahler} investigated the possible exceptional sets of entire functions having rational coefficients in their Taylor series (the set of these functions is denoted by $T_{\infty}$). In particular, he claimed to have a proof for the following result: \textit{If $S$ is closed relative to $\QQ$} (i.e., if $\alpha\in S$, then any algebraic conjugate of $\alpha$ also lies in $S$) and $0\in S$, then there is a function $f\in T_{\infty}$ such that $S_f=S$. The interest in the arithmetic behavior of the coefficients of the Taylor series of the functions may have origin in the Straus-Schneider theorem which relates the cardinality of the set of algebraic values where a function take, together with all its derivatives, integer values with the order of the function (see \cite[Chap. 9]{chud} for more details on these kind of results).

In the same book, Mahler suggested three problems on the arithmetic behavior of transcendental functions. He named them as problems A, B and C. The problem C is exactly based on the previous cited work of Mahler. More precisely

\begin{questionM}
Does there exist for any choice of $S$ (closed under complex conjugation and such that $0\in S$) a series $f$ in $T_{\infty}$ for which $S_f=S$?
\end{questionM}

We remark that the sentences in parentheses do not appear in the Mahler's original question. However, they are necessary since for any function $f\in T_{\infty}$, one has that $\overline{f(\alpha)}=f(\overline{\alpha})$ and that $f(0)$ is a rational number. So, our guess is that in his statement, ``any choice"\ means ``any possible choice".

This question was answered positively by Marques and Ramirez \cite{MR} (we refer the reader to \cite{DG} for the solution of Problem B). Their constrution is simple and it is based on the fact that $\Q$ and $\Q(i)$ are dense in $\R$ and in $\C$, respectively.

Another kind of problem appears when we require the coefficients to be integers. It is almost unnecessary to stress that the question for integer coefficients is substantially harder than the rational case, since $\Z$ is not dense in $\R$. However, in 1965, Mahler \cite{M} already studied the arithmetic behavior of transcendental functions with integer coefficients. Indeed, he proved that every set $S\subseteq \QQ\cap B(0,1)$, which is closed relative to $\QQ$ and such that $0\in S$, is exceptional for some transcendental function in $\ZZ$ (here, as usual,  $\Z\{z\}$ denotes the set of the power series with integer coefficients and which are analytic in $B(0,1)$). Therefore, a question arise: how about the Mahler question for integer coefficients? That is,

\begin{questionA}
Does there exist for any choice of $S\subseteq \QQ\cap B(0,1)$ (closed under complex conjugation and such that $0\in S$) a transcendental function $f \in \Z\{z\}$ for which $S_f=S$?
\end{questionA}

We refer the reader to \cite{bookmahler,wal1} (and references therein) for more results about the arithmetic behavior of transcendental functions.

In this paper, we give an affirmative answer to the previous question which fullfil the work of Mahler about these functions. More precisely, we prove that
\begin{theorem}\label{1}
Every subset of $\QQ\cap B(0,1)$, closed under complex conjugation which contains the element $0$, is the exceptional set of uncountably many transcendental functions in $\Z\{z\}$.
\end{theorem}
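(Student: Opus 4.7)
The plan is to construct $f\in\ZZ$ as an infinite series $f=\sum_{k\ge 1}P_k$, where each block $P_k(z)=c_k R_k(z)z^{n_k}$ has integer coefficients and the exponents $n_k$ grow so rapidly that $f$ is analytic on $B(0,1)$. Since $f$ has real coefficients, the identity $f(\bar z)=\overline{f(z)}$ makes $S_f$ automatically closed under complex conjugation; choosing $P_1$ so that $f(0)=0$ forces $0\in S_f$. First I would enumerate $S=\{\alpha_0=0,\alpha_1,\alpha_2,\ldots\}$ and $T=(\QQQ)\setminus S=\{\beta_1,\beta_2,\ldots\}$ (either enumeration may be finite). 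The blocks $P_k$ are then chosen inductively so that (a) $\lim_{K\to\infty}f_K(\alpha_j)$ exists and equals a prescribed algebraic number (say $0$) for every $\alpha_j\in S$, and (b) for every $\beta_j\in T$ the growth of $n_{k+1}$ relative to the heights and degrees of the partial sums at $\beta_j$ eventually triggers Liouville's inequality.

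At stage $k$, I would pick $n_k$ much larger than anything used so far, and choose $c_k\in\Z\setminus\{0\}$ together with an integer polynomial $R_k$ of small height, nudging the running values $f_k(\alpha_j)$, $j\le k$, toward their algebraic targets. The key subtlety is that one \emph{cannot} enforce $f(\alpha_j)\in\QQ$ by making each $P_k$ vanish at $\alpha_j$: if $\alpha_j\in\QQ$ has Galois degree $d\ge 3$, then any $P\in\Z[z]$ with $P(\alpha_j)=0$ is divisible by the minimal polynomial of $\alpha_j$ and hence vanishes at every Galois conjugate of $\alpha_j$, some of which may lie in $T$ where we need transcendental values. Hence the convergence $f(\alpha_j)\to 0$ has to be produced numerically by cumulative corrections, exploiting the freedom in $c_k,R_k$ together with the fact that $|\alpha_j|^{n_k}$ is extremely small once $n_k$ is taken huge.

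For $\beta\in T$, the partial sum $f_K(\beta)$ is algebraic of bounded degree (at most $[\Q(\beta):\Q]$) and of controlled height $H_K$ depending on the data up to stage $K$. If $f(\beta)$ were algebraic, Liouville's inequality would give $|f(\beta)-f_K(\beta)|\ge C\,H_K^{-D}$ for constants $C,D$ determined by $\beta$ and the assumed degree of $f(\beta)$; but the tail is bounded by $2|c_{K+1}|\,\|R_{K+1}\|\,|\beta|^{n_{K+1}}$, and taking $n_{K+1}$ super-polynomially large compared with $H_K$ makes the tail much smaller than Liouville's lower bound, a contradiction. A diagonal ordering handles all $\beta\in T$ simultaneously, and transcendence of $f$ itself follows from the transcendence of any $f(\beta)$, $\beta\in T$ (the degenerate case $T=\emptyset$, i.e.\ $S=\QQQ$, reduces to Mahler's 1965 result). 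Finally, uncountability of the family is obtained by retaining a binary degree of freedom (for example the sign of $c_k$) at infinitely many stages, producing continuum-many pairwise distinct $f$, all satisfying the same constraints. I expect the main obstacle to be the simultaneous bookkeeping: steering the partial sums at every point of $S$ toward their algebraic targets while keeping the Liouville-type transcendence-forcing lower bounds at every point of $T$ intact.
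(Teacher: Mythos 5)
There is a genuine gap, and it sits exactly at the point you flag as the ``key subtlety.'' You correctly observe that a block $P_k\in\Z[z]$ cannot vanish at an algebraic $\a_j$ of degree $\ge 3$ without vanishing at all of its conjugates, some of which may lie in $T$. But your proposed escape --- driving $f_K(\a_j)$ to the algebraic target $0$ ``numerically by cumulative corrections'' --- is incompatible with the Liouville mechanism you rely on at the points of $T$. If $f_K(\a_j)\neq 0$ for infinitely many $K$, then $f_K(\a_j)$ is a nonzero algebraic number of bounded degree, hence $|f_K(\a_j)|$ is bounded below by a fixed negative power of its height; to reach the limit $0$ the remaining blocks must contribute at least that much at $\a_j$, so $\sum_{k>K}|c_k|\,L(R_k)\,|\a_j|^{n_k}$ is only \emph{polynomially} small in the heights. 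But your transcendence argument at $\b\in T$ needs the triangle-inequality tail bound $\sum_{k>K}|c_k|\,L(R_k)\,|\b|^{n_k}$ to be \emph{super-polynomially} small in those same heights (the hypothetical degree $t$ of $f(\b)$ is arbitrary), and since $|R_k(z)|\le L(R_k)$ uniformly on the ball, a polynomial block cannot be large at $\a_j$ and simultaneously small at a $\b$ with $|\b|\ge|\a_j|$. Since $S$ and $T$ may interleave densely in $B(0,1)$, the two requirements collide. Put differently: your Liouville argument does not know that the intended limit at $\a_j$ is $0$; run verbatim, it would prove $f(\a_j)$ transcendental as well, unless the partial sums at $\a_j$ are eventually constant --- which forces exact vanishing of the tail blocks at $\a_j$ and returns you to the Galois obstruction.

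The missing idea is that the obstruction disappears if the blocks are allowed to be genuine power series in $\ZZ$ rather than polynomials. The paper's construction rests on a theorem of Harbater (Lemma 1 and its elaborations in Lemmas 2 and 4): for each $\a\in\QQQ$ there is an $f_\a\in\ZZ$, with controlled growth near $|z|=1$, whose only zeros among the algebraic points of the ball are $\a$ and $\overline{\a}$ --- no other conjugates --- and which moreover takes values in $\Q(i)$ at every algebraic point of the ball. The paper then sets $f(z)=\sum_{k\ge 1}z^{s_k}f_1(z)\cdots f_k(z)$, so that at $\a_j\in S$ all terms with $k\ge j$ vanish \emph{identically} and $f(\a_j)$ is a finite sum of algebraic numbers; the Liouville--Bombieri argument is then applied only at points of $T$, where every partial sum genuinely differs from the next. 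Your overall architecture (lacunary series, height bookkeeping, Liouville inequality, binary freedom in the exponents for uncountability) matches the paper's, but without a replacement for Harbater's input the plan cannot be completed. One smaller point: your claim that transcendence of $f$ follows from transcendence of some $f(\b)$ needs $T\neq\emptyset$ and an algebraic dependence with coefficients in $\QQ$; the paper instead proves (Lemma 5 and Corollary 1) that only countably many power series with rational coefficients are algebraic functions, which also covers the case $S=\QQQ$.
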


Our proof combines some key lemmas, which by themselves can be of widely theoretical interest, with a Bombieri result which gives tools for proving transcendence.

\section{Key lemmas}

In this section, we shall provide some lemmas which are essential ingredients in our proof.

The first lemma is a particular case of a result due to Harbater \cite[Lemma 1.5]{H}, according to which, if $r$ is a positive real number less than $1$ and $\lambda$ is
a non-zero complex number of absolute value at most $r$, then there is a function
$f\in \Z\{t\}$ such that $f(0)=1$ which vanishes to order $1$ at $\lambda$ and its
complex conjugate, and vanishes nowhere else in the disc $|t|\le r$.

\begin{lemma}\label{l1}
Let $\a,\b\in B(0,1)$ with $\b\not\in \{\a,\aa\}$. Then there exists a function $f\in \Z\{z\}$ such that $f(\a)=0$ and $f(\b)\neq 0$. Moreover, there exists a positive constant $C$ depending on $\alpha$ and $\beta$ such that $|f(z)|\leq C/(1-|z|)$, for all $z\in B(0,1)$. 
\end{lemma}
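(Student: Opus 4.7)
The plan is to reduce the statement directly to the Harbater result recalled just before the lemma, handling the case $\a = 0$ by hand. In the degenerate case $\a = 0$, the hypothesis $\b \notin \{\a, \aa\} = \{0\}$ forces $\b \neq 0$, and we simply take $f(z) = z$: then $f \in \ZZ$, $f(\a) = 0$, $f(\b) = \b \neq 0$, and $|f(z)| = |z| < 1 \leq 1/(1-|z|)$ for every $z \in B(0,1)$, so $C = 1$ works.

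Now assume $\a \neq 0$. Fix a real number $r$ with $\max(|\a|, |\b|) \leq r < 1$, which is available because $|\a|, |\b| < 1$. Apply Harbater's lemma with $\lambda := \a$, obtaining $f \in \ZZ$ with $f(0) = 1$, vanishing to first order at $\a$ and $\aa$ and nowhere else inside the closed disc $\{|z| \leq r\}$. In particular $f(\a) = 0$; and since $\b$ lies in that disc but differs from both $\a$ and $\aa$ by hypothesis, we get $f(\b) \neq 0$, as required by the first part of the statement.

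The remaining task is the growth estimate $|f(z)| \leq C/(1-|z|)$, which is the main obstacle. The strategy is to extract from Harbater's construction a uniform bound on the Taylor coefficients of $f$: once $|a_n| \leq M$ for all $n$, summing a geometric series immediately yields $|f(z)| \leq \sum_{n \geq 0} |a_n|\, |z|^n \leq M/(1-|z|)$, so $C = M$ suffices. To this end one inspects the proof of Harbater's lemma, in which $f$ is built as a convergent infinite product of integer-coefficient polynomials whose factors come from sufficiently rapid rational approximations to $\a$ and $\aa$, and one verifies that the resulting Taylor coefficients stay bounded in terms of $r$ (equivalently, in terms of $\a$ and $\b$). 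This coefficient bound is the only step that is not essentially a repackaging of the cited Harbater result; all remaining content of the lemma is formal.
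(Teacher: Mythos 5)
Your proposal is correct and follows essentially the same route as the paper: apply Harbater's lemma with $\lambda=\alpha$ on a disc of radius $r<1$ containing both $\alpha$ and $\beta$, deduce $f(\alpha)=0$ and $f(\beta)\neq 0$ from the location of the zeros, and get the growth bound from a uniform bound on the Taylor coefficients via a geometric series. The paper makes the one step you defer (``one verifies that the resulting Taylor coefficients stay bounded'') concrete by quoting the shape of Harbater's construction, $f(z)=f_s(z)(1+b_1z+b_2z^2+\cdots)$ with $f_s$ a polynomial depending only on $\alpha,\overline{\alpha}$ and $|b_i|\le 1/2$, so that the coefficients of $f$ are bounded by the length of $f_s$ times a constant.
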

\begin{proof}
Take $r=(\max\{|\a|,|\b|\}+1)/2$ and $\lambda=\alpha$ in the Lemma 1.5 of \cite{H}. That lemma ensures the existence of a function $f\in \ZZ$ such that the only zeros of $f$ inside $B(0,r)$ are $\alpha$ and $\overline{\a}$. Since $\b\in B(0,r)$, then $f(\b)\neq 0$. Also, Harbater construction gives a function of the form $f(z)=f_s(z)(1+b_1z+b_2z^2+\cdots)$, where $f_s(z)$ is a polynomial with coefficients depending only on $\a$ and $\overline{\a}$ and $|b_i|\leq 1/2$ (also $f_s(0)=1$). Therefore, the coefficients of $f$ are bounded by $L(f_s)/2$ (where $L(f)$ denotes the {\it length} of the polynomial $f$, i.e., the sum of the absolute values of its coefficients) and this length depends only on $\a, \overline{\a}$ and $s=\lfloor \log (2(1-r))/\log r\rfloor +1$. This gives our desired bound.
\end{proof}

For the next lemma, it is not possible to use Harbater's result directly, since the set of algebraic numbers has limit points at $|z|=1$. 

\begin{lemma}\label{l2}
Let $\a\in \QQQ$. Then there exists a function $f\in \ZZ$ such that $f(z)=0$ for $z\in \QQQ$ if and only if $z\in \{\a,\aa\}$. Moreover, there exists a positive constant $C$, such that $|f(z)|\leq C/ (1-|z|)^3$, for all $z\in B(0,1)$.
\end{lemma}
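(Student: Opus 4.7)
The plan is to build the desired $f$ as a ``gap'' series
\[
f(z)=\sum_{n\geq 1}a_n\, z^{N_n}h_n(z),
\]
where each $h_n\in\ZZ$ comes from Lemma~\ref{l1}, the gap exponents $N_n$ grow much faster than the constants $C_n$ of Lemma~\ref{l1}, and the coefficients $a_n\in\{1,2\}$ are chosen greedily. First I would enumerate $\QQQ\setminus\{\a,\aa\}=\{\b_j\}_{j\geq 1}$, placing $\b_1=0$ if $\a\neq 0$. For each $n$, Lemma~\ref{l1} applied to $(\a,\b_n)$ produces $h_n\in\ZZ$ with $h_n(\a)=h_n(\aa)=0$, $h_n(\b_n)\neq 0$, and $|h_n(z)|\leq C_n/(1-|z|)$ on $B(0,1)$. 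Since every summand already kills both $\a$ and $\aa$, the task reduces to keeping $f(\b_j)\neq 0$ for every $j$, all while landing in $\ZZ$ with polynomial-in-$1/(1-|z|)$ growth. (If $\a=0$ one instead sets $f(z)=z\,G(z)$ and runs the same argument for $G$, using Lemma~\ref{l1} around any fixed transcendental point in $B(0,1)$; Lemma~\ref{l1} does not require its first argument to be algebraic.)

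Next I would define $F_n:=\sum_{k\leq n}a_k z^{N_k}h_k(z)$ inductively. When $\a\neq 0$, start with $N_1=0$ and $a_1=1$ so that $F_1(0)=h_1(0)=1$ handles $\b_1=0$; otherwise start from $F_0\equiv 0$. Assuming at stage $n\geq 2$ that $F_{n-1}(\b_j)\neq 0$ for all $j<n$, set $\nu_j:=|F_j(\b_j)|>0$ and pick $N_n>N_{n-1}$ so large that $N_n\geq C_n^2$ and
\[
2|\b_j|^{N_n}|h_n(\b_j)|\leq 2^{-n}\nu_j\qquad(j<n);
\]
this is possible because each $|\b_j|<1$ (and the constraint at $\b_1=0$ is trivial once $N_n\geq 1$). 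Then choose $a_n\in\{1,2\}$ so that $F_n(\b_n)=F_{n-1}(\b_n)+a_n\b_n^{N_n}h_n(\b_n)\neq 0$: the forbidden value of $a_n$ is a single complex number, so at least one of the two candidates works. Letting $f:=\lim_n F_n$, the triangle inequality at each $\b_j$ yields $|f(\b_j)-F_j(\b_j)|\leq\sum_{n>j}2^{-n}\nu_j<\nu_j$, so $f(\b_j)\neq 0$ for every $j$, while $f(\a)=f(\aa)=0$ term by term.

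Finally I would verify $f\in\ZZ$ with the correct growth. Integrality of each Taylor coefficient is automatic because $N_n\to\infty$ strictly, so at any given degree only finitely many summands contribute. For the analytic bound, combining $|h_n(z)|\leq C_n/(1-|z|)$ with $C_n\leq\sqrt{N_n}$ gives
\[
|f(z)|\leq\frac{2}{1-|z|}\sum_{n\geq 1}\sqrt{N_n}\,|z|^{N_n}\leq\frac{2}{1-|z|}\sum_{k\geq 0}\sqrt{k}\,|z|^k=O\!\left((1-|z|)^{-5/2}\right),
\]
using $\sum_{k}\sqrt{k}\,|z|^k\sim\Gamma(3/2)(1-|z|)^{-3/2}$ as $|z|\to 1^-$; this is even stronger than the claimed $C/(1-|z|)^3$. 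The main obstacle will be the simultaneous juggling of three competing requirements — non-vanishing at every earlier $\b_j$, integer Taylor coefficients across the infinite sum, and controlled growth as $|z|\to 1$ — and the rapid gap $N_n\gg C_n^2$ together with the $2^{-n}\nu_j$ tail bound is what balances all three in one stroke.
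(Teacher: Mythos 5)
Your proof is correct and follows the same overall strategy as the paper's: a lacunary series built from the functions of Lemma~\ref{l1}, with gap exponents growing fast enough relative to the constants $C_n$ to force both integrality of the Taylor coefficients, analyticity, and the polynomial growth bound, plus a geometric tail estimate guaranteeing non-vanishing at each $\beta_j$. The one genuine difference is the device used to keep the value at $\beta_n$ away from zero. The paper keeps all coefficients equal to $1$ and, when the $N$-th term threatens to cancel the preceding partial sum, shifts the exponent $t_N\mapsto t_N+\ell$ with $|\beta_N|^{\ell}<1/4$ so that the shifted term is too small to effect the cancellation; this forces it to bound the tail relative to the $N$-th term $|\beta_N|^{t_N}|f_N(\beta_N)|$ and to re-verify the inductive inequality after each shift. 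You instead fix the exponent schedule in advance of the coefficient choice and introduce $a_n\in\{1,2\}$, observing that exactly one complex value of $a_n$ could make $F_n(\beta_n)$ vanish, so one of the two integer candidates survives; the tail is then bounded directly against $\nu_j=|F_j(\beta_j)|$. Your pigeonhole is slightly cleaner than the exponent-shifting argument, and your growth bound $O\bigl((1-|z|)^{-5/2}\bigr)$ is sharper than the stated $C/(1-|z|)^3$ (either suffices for Lemma~\ref{l4}). The only inefficiency is the case $\alpha=0$: your detour through $f(z)=zG(z)$ with $G$ built around a transcendental point is valid but much heavier than the paper's one-line choice $f(z)=z$.
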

\begin{proof}
If $\a=0$, take $f(z)=z$, which satisfies $|f(z)|\leq 1/ (1-|z|)^3, \forall z\in B(0,1)$. So we may suppose $\a\ne 0$. Write $\QQQ\backslash\{\a,\aa\}=\{\beta_1,\beta_2,\ldots\}\cup \{\overline{\b}_1,\overline{\b}_2,\ldots\}$ with $\beta_1=0$ and $\beta_k\ne 0$ for $k>1$. By Lemma \ref{l1}, for all $j\geq 1$, there exists a function $f_j\in \Z\{z\}$ such that $f_j(\a)=0$ and $f_j(\b_j)\neq 0$, and there is a constant $C_j\ge 1$ such that $|f_j(z)|\leq C_j/(1-|z|)$, for all $z\in B(0,1)$. The function $f$ will be defined by $f(z):=\sum_{k\geq 1}z^{t_k}f_k(z)$, where $(t_k)_k$ is an increasing sequence of natural numbers which will be chosen later satisfying $t_1=0$ and $t_k\ge C_k+k, \forall k>1$. This implies that the function $f$ is analytic in $B(0,1)$ and satisfies $|f(z)|\leq C/ (1-|z|)^3, \forall z\in B(0,1)$, for some positive constant $C$. Indeed, for $z=0$, one has $|f(0)|=|f_1(0)|\le C:=\lfloor |f_1(0)| \rfloor+1$. Now, for $0<|z|<1$, the function $x\mapsto x|z|^x$ has maximum at $x=1/|\log |z||$ and so we have that $C_k|z|^{C_k}\leq e^{-1}/|\log |z||$. Thus, $|f(z)|\leq (e(1-|z|)|\log |z||)^{-1}\sum_{k\geq 0}|z|^{m_k}\leq (e(1-|z|)^2|\log|z||)^{-1}\leq 1/(1-|z|)^3$, where we used that $|\log |z||\geq 1-|z|$, for $z\in B(0,1)$. To finish we obtain our estimate by combining this case with the case $z=0$ (since $C\ge 1$).

We start with the sequence $({\hat t}_k)_k$ given by ${\hat t}_1=0$ and, for $k>1$, 
$${\hat t}_k=\max\{C_k+k,
{\hat t}_{k-1}+\frac{(k\log 2+\log C_k)+\max_{1<r\le k}|\log |f_r(\b_r)(1-|\b_r|)||}{\min_{1<r\le k}|\log |\b_r||}\}.$$
The sequence $({\hat t}_k)_k$ is increasing and satisfies that, for $k>n>1$, 
\begin{equation}\label{O}
{\hat t}_k-{\hat t}_n\ge {\hat t}_k-{\hat t}_{k-1}\ge \frac{(k\log 2+\log C_k)+|\log |f_n(\b_n)(1-|\b_n|)||}{|\log |\b_n||}.
\end{equation}
This implies that, for $k>n>1$, $|\b_n|^{{\hat t}_k-{\hat t}_n}C_k\le 2^{-k}|f_n(\b_n)|(1-|\b_n|)$ and thus
\[
\left|\displaystyle\sum_{k\geq n+1}\beta_n^{{\hat t}_k}f_k(\b_n)\right|\le |\b_n|^{{\hat t}_n}\sum_{k\geq n+1}2^{-k}|f_n(\b_n)|=2^{-n}|f_n(\b_n)||\b_n|^{{\hat t}_n}\le \frac14 |\b_n|^{{\hat t}_n}|f_n(\b_n)|
\]

Now, we modify (if necessary) the sequence $({\hat t}_k)_k$ in order to obtain a sequence $(t_k)_k$ such that, for any $N\ge 1$,
\begin{equation}\label{I}
\left|\displaystyle\sum_{k=1}^N\beta_N^{t_k}f_k(\b_N)\right|>\frac{1}{2}|\b_N|^{t_N}|f_N(\b_N)|.
\end{equation}
Set initially $t_n={\hat t}_n$ for every $n\ge 1$. We will modify inductively this sequence - in the $n$-th step we may modify its $n$-th term and the subsequent terms, but not the previous ones.

For $N=1$, the above inequality clearly holds. Suppose, by induction hypothesis, that (\ref{I}) holds for all $1\leq j<N$. Now, we need to choose $t_N$. If (\ref{I}) is valid for $t_N$, then we have nothing to do. So, supposing the contrary, we obtain
\begin{eqnarray*}
|\b_N|^{t_N}|f_N(\b_N)|-\left|\displaystyle\sum_{k=1}^{N-1}\beta_N^{t_k}f_k(\b_N)\right| & \leq & \left|\displaystyle\sum_{k=1}^{N-1}\beta_N^{t_k}f_k(\b_N)+ \b_N^{t_N}f_N(\b_N)\right|\\
 &\leq &  \frac{1}{2}|\b_N|^{t_N}|f_N(\b_N)|.
\end{eqnarray*}
This implies that
\begin{equation}\label{II}
\left|\displaystyle\sum_{k=1}^{N-1}\beta_N^{t_k}f_k(\b_N)\right|\geq  \frac{1}{2}|\b_N|^{t_N}|f_N(\b_N)|.
\end{equation}
Let $\ell$ be the smallest positive integer such that $|\b_N|^{\ell}<1/4$. We shall prove that, if we replace $t_N$ by $\tilde{t}_N=t_N+\ell$, then (\ref{I}) holds. In fact, first note that $4|\b_N|^{\tilde{t}_N}|f_N(\b_N)|<|\b_N^{t_N}||f_N{\b_N}|$ and then
\begin{eqnarray*}
\left|\displaystyle\sum_{k=1}^{N-1}\beta_N^{t_k}f_k(\b_N)+ \b_N^{\tilde{t}_N}f_N(\b_N)\right| & \geq & \left|\displaystyle\sum_{k=1}^{N-1}\beta_N^{t_k}f_k(\b_N)\right|-|\b_N|^{\tilde{t}_N}|f_N(\b_N)|\\
& > & |\b_N|^{\tilde{t}_N}|f_N(\b_N)|
\end{eqnarray*}
as desired. This proves the inequality in (\ref{I}). 

Now, replace $t_m$ by $t_m+\ell$ for every $m\ge N$, and repeat this process inductively, replacing $N$ by $N+1$ in the above construction.

Notice that, in the end, we get a sequence $(t_k)_k$ which still satisfies (\ref{O}), and so, for every $N>1$, 
$$\left|\sum_{k\geq N+1}\beta_N^{t_k}f_k(\b_N)\right|\le\frac14 |\b_N|^{t_N}|f_N(\b_N)|$$

Thus, we have by inequality in (\ref{I}), 
\begin{eqnarray*}
|f(\b_N)| & \geq & \left|\displaystyle\sum_{k=1}^N\beta_N^{t_k}f_k(\b_N)\right|-\left|\displaystyle\sum_{k\geq N+1}\beta_N^{t_k}f_k(\b_N)\right|\\
& \geq & \frac{1}{2}|\b_N|^{t_N}|f_N(\b_N)|-\frac14 |\b_N|^{t_N}|f_N(\b_N)|=\frac14 |\b_N|^{t_N}|f_N(\b_N)|>0. 
\end{eqnarray*}
This implies that $f(\b_N)\ne 0$ for every $N>1$. Since $f(\b_1)=f(0)=f_1(0)=f_1(\b_1)\ne 0$, the proof is complete.
\end{proof}

\begin{lemma}\label{l3}
Let $\a\in B(0,1)$ and $\b\in \C$ such that $\a\neq 0$ and if $\a$ is real then so is $\b$. Then there exists a function $g\in \ZZ$, with bounded coefficients, such that $g(\a)=\b$. 
\end{lemma}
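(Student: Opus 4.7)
The plan is to construct $g(z)=\sum_{n\geq 0}c_nz^n$ by a greedy algorithm that rounds residues to integers, splitting into two cases according to whether $\a$ is real.

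If $\a\in\R$, then $\b\in\R$ by hypothesis. I set $\gamma_0=\b$ and inductively let $c_n$ be an integer nearest to $\gamma_n/\a^n$, with $\gamma_{n+1}=\gamma_n-c_n\a^n$. Then $|\gamma_{n+1}|\leq |\a|^n/2$, so $\gamma_n\to 0$ and $g(\a)=\b$. For $n\geq 1$, the bound $|\gamma_n|\leq |\a|^{n-1}/2$ combined with $c_n=(\gamma_n-\gamma_{n+1})/\a^n$ gives $|c_n|\leq 1/(2|\a|)+1/2$, so the sequence $(c_n)_{n\geq 0}$ is bounded.

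If instead $\a\notin\R$, then $\{1,\a\}$ is an $\R$-basis of $\C$, and I round two consecutive coefficients at each step. Set $\gamma_0=\b$; at stage $k\geq 0$ write uniquely $\gamma_k=x_k\a^{2k}+y_k\a^{2k+1}$ with $x_k,y_k\in\R$, choose integers $c_{2k},c_{2k+1}$ closest to $x_k,y_k$, and set $\gamma_{k+1}=\gamma_k-c_{2k}\a^{2k}-c_{2k+1}\a^{2k+1}$. Then $|\gamma_{k+1}|\leq (1+|\a|)|\a|^{2k}/2\to 0$, which gives $g(\a)=\b$ after telescoping. To see the $c_n$ are bounded, let $L\colon \C\to\R^2$ be the inverse of the $\R$-linear isomorphism $(u,v)\mapsto u+v\a$. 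Then $(x_k,y_k)=L(\gamma_k/\a^{2k})$, and for $k\geq 1$ the estimate $|\gamma_k|/|\a|^{2k}\leq (1+|\a|)/(2|\a|^2)$ yields a uniform bound on $|x_k|,|y_k|$ depending only on $\a$; the initial integers $c_0,c_1$ depend on $|\b|$ but that only affects finitely many coefficients. Bounded integer coefficients in turn guarantee $g\in\ZZ$ by comparison with a geometric series.

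The step I expect to be the main obstacle is the non-real case: the naive scheme of rounding $\gamma_n/\a^n$ to its nearest integer fails because this complex quotient can be far from $\Z$ in the imaginary direction, so no single integer will shrink the remainder. Pairing two consecutive powers to exploit that $\a^{2k}$ and $\a^{2k+1}$ span $\C$ over $\R$ is the crucial idea. Once this observation is made, the convergence of $\gamma_k$ to zero and the boundedness of the coefficients are straightforward from the geometric decay of $|\a|^{2k}$ and the fixed operator norm of $L$.
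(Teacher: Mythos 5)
Your proof is correct and follows essentially the same route as the paper: the non-real case pairs the powers $\a^{2k},\a^{2k+1}$ and uses the boundedness of the inverse of $(u,v)\mapsto u+v\a$ (the paper's ``Fact,'' proved via equivalence of norms on $\C$), and the real case is just a nearest-integer variant of the paper's base-$1/\a$ digit expansion (which incidentally lets you skip the paper's reduction of negative $\a$ to $\a^2$).
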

\begin{proof}
Suppose that $\a$ and then $\b$ are real numbers. We may also assume that $\a>0$ (otherwise, we replace $\a$ by $\a^2$). Hence we can write the $1/\a$-expansion of $\b$ as 
\[
\b=a_0+\frac{a_1}{1/\a}+\frac{a_2}{(1/\a)^2}+\frac{a_3}{(1/\a)^3}+\cdots,
\]
where $a_0,a_1,\ldots$ are integers with $a_0=\lfloor \b\rfloor$ and $0\leq a_k\leq 1/\a$. Therefore the desired function has the form $g(z)=\sum_{k\geq 0}a_kz^k$. 

Now, let us suppose that $\a\notin \R$. In this case, we shall need the following key fact: 

\begin{fact}
Let~$\alpha\not\in \R$. Then there exists ${K=K(\alpha)>0}$ such that the following holds.  For any ${z\in \C}$ there exist unique ${x,y\in \R}$ such that ${z=x+y\alpha}$ and 
${|x|,|y|\le  K|z|}$.
\end{fact}
\begin{proof}
Since ${\alpha \notin\R}$, any ${z\in \C}$ can be uniquely written as ${z=x+y\alpha}$ with ${x,y\in \R}$. Then 
${z\mapsto \max\{|x|,|y|\}}$ defines an $\R$-norm on~$\C$. Since any two norms on~$\C$ are equivalent, the result follows.  
\end{proof}

In order to prove the lemma, we will define two bounded sequences of integers  $(b_k)_{k\geq 0}$ and $(c_k)_{k\geq 0}$ such that 
\begin{equation}
\label{etwosums}
\sum_{k=0}^\infty b_k\alpha^{2k}+\sum_{k=0}^\infty c_k\alpha^{2k+1}=\beta.
\end{equation}
Write ${\beta=x+y\alpha}$ with ${x,y\in \R}$ and set 
\[
b_0=\lfloor x\rfloor, \qquad c_0=\lfloor y\rfloor.
\]
By the previous fact, we have 
\begin{equation}
\label{ezero}
|b_0|, |c_0|\le K|\beta|+1. 
\end{equation}
and we have clearly 
\[
|\beta -(b_0+c_0\alpha)|\le 1+|\alpha|\le 2. 
\]
Now assume that ${b_0, \ldots, b_n, c_0, \ldots, c_n\in \Z}$ are defined to satisfy~\eqref{ezero} and the following two conditions:
\begin{align}
\label{ek}
&|b_k|, |c_k|\le 2K|\alpha|^{-2}+1 \qquad (k=1, \ldots, n),\\
\label{erest}
&\left|\beta -\left(\sum_{k=0}^n b_k\alpha^{2k}+\sum_{k=0}^n c_k\alpha^{2k+1}\right)\right| \le 2|\alpha|^{2n}.
\end{align}
We will define ${b_{n+1}, c_{n+1}\in \Z}$ so that~\eqref{ek} would also hold for ${k=n+1}$, and~\eqref{erest} would hold with~$n$ replaced by ${n+1}$.

Set
\[
z=\frac{\beta -\left(\sum_{k=0}^n b_k\alpha^{2k}+\sum_{k=0}^n c_k\alpha^{2k+1}\right)}{\alpha^{2n+2}}. 
\]
Then ${|z|\le 2|\alpha|^{-2}}$ by \eqref{erest}. Applying the fact, we find $ {x,y\in \R}$ such that ${z=x+y\alpha}$ and 
\[
|x|,|y|\le K|z|\le 2K|\alpha|^{-2}.
\]
Setting 
\[
b_{n+1}=\lfloor x\rfloor, \qquad c_{n+1}=\lfloor y\rfloor,
\]
we have~\eqref{ek} with ${k=n+1}$. Further, we have clearly 
\[
|z -(b_{n+1}+c_{n+1}\alpha)|\le 1+|\alpha|\le 2. 
\]
With our definition of~$z$ this re-writes as 
\[
\left|\frac{\beta -\left(\sum_{k=0}^{n+1} b_k\alpha^{2k}+\sum_{k=0}^{n+1} c_k\alpha^{2k+1}\right)}{\alpha^{2n+2}}\right| \le 2, 
\]
which is~\eqref{erest}  with~$n$ replaced by ${n+1}$.

Thus, we defined bounded sequences of integers  $(b_k)_{k\geq 0}$ and $(c_k)_{k\geq 0}$ such that~\eqref{erest} holds for all~$n$. Hence~\eqref{etwosums} holds. In conclusion, the lemma follows for the choice of $g(z)=\sum_{k\geq 0}a_kz^k$, where
$$
a_n=
\begin{cases}
b_m, &n=2m,\\
c_m,&n=2m+1.
\end{cases}
$$ 
\end{proof}

\begin{remark}
We can see in the proof of the previous lemma that if $\beta\in B(0,1)$ then the upper bound for the coefficients of $g$ will depend only on $\a$. We shall use this in the next construction.
\end{remark}

\begin{lemma}\label{l4}
Let $\a\in \QQQ$. Then there exists a function $f\in \ZZ$ such that $f(\QQQ)\subseteq \Q(i)$ and $f(z)=0$ for $z\in \QQQ$ if and only if $z\in \{\a,\aa\}$. Moreover, $|f(z)|\leq Ce^{2/(1-|z|)^2}$, for all $z\in B(0,1)$, where $C>0$ depends only on $\a$.
\end{lemma}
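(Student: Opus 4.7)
The plan is to start with the function $F_0\in\ZZ$ provided by Lemma \ref{l2} for $\a$ (so $F_0$ vanishes on $\QQQ$ exactly at $\{\a,\aa\}$ and $|F_0(z)|\le C_0/(1-|z|)^3$) and then correct it inductively so that the value at every other algebraic point of $B(0,1)$ lands in $\Q(i)\setminus\{0\}$. I would enumerate $\QQQ\setminus\{\a,\aa\}$ in complex-conjugate pairs $\{\b_1,\overline{\b}_1\},\{\b_2,\overline{\b}_2\},\ldots$ (singletons when $\b_n\in\R$), starting with $\b_1=0$ whenever $\a\ne 0$. For each $n\ge 1$ let $F_n\in\ZZ$ be the function given by Lemma \ref{l2} applied to $\b_n$; it vanishes on $\QQQ$ exactly at $\{\b_n,\overline{\b}_n\}$ and satisfies $|F_n(z)|\le C_n/(1-|z|)^3$.

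Next I would build $f=F_0+\sum_{n\ge 2}h_n$, where the correction at step $n$ has the form
\[
h_n(z)=F_0(z)F_1(z)\cdots F_{n-1}(z)\,z^{s_n}\,g_n(z),
\]
with integer $s_n$ and $g_n\in\ZZ$ still to be chosen. The product of the $F_k$'s makes $h_n$ vanish at $\a,\aa$ and at every previously treated $\b_k,\overline{\b}_k$, so the correction does not disturb any value already prescribed. Writing $f_{n-1}$ for the partial sum through step $n-1$, I would pick $\mu_n\in\Q(i)\setminus\{0\}$ (in $\Q$ when $\b_n\in\R$) and set
\[
c_n=\frac{\mu_n-f_{n-1}(\b_n)}{F_0(\b_n)F_1(\b_n)\cdots F_{n-1}(\b_n)\,\b_n^{s_n}}.
\]
The denominator is nonzero because the enumeration keeps $\b_n$ away from every zero of every factor, and Lemma \ref{l3} (together with the Remark; note $c_n\in\R$ when $\b_n\in\R$) produces $g_n\in\ZZ$ with bounded coefficients and $g_n(\b_n)=c_n$. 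Since every ingredient has integer coefficients, complex conjugation automatically forces $f_n(\overline{\b}_n)=\overline{\mu}_n\in\Q(i)\setminus\{0\}$. When $\b_1=0$ no correction is needed because $F_0(0)=1\in\Q(i)\setminus\{0\}$ from the construction in Lemma \ref{l2}; and if $\a=0$ one takes $F_0(z)=z$ and drops $0$ from the enumeration.

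Finally I would choose $s_n$ large enough to both control growth and ensure $f\in\ZZ$. Combining $|F_k(z)|\le C_k/(1-|z|)^3$ with the uniform coefficient bound for $g_n$ yields $|h_n(z)|\le A_n|z|^{s_n}/(1-|z|)^{3n+1}$ for some $A_n$ depending only on $n$. A short calculus exercise gives
\[
\sup_{r\in[0,1)}\frac{r^{s_n}}{(1-r)^{3n+1}\,e^{2/(1-r)^2}}\longrightarrow 0\qquad\text{as } s_n\to\infty,
\]
so $s_n$ can be chosen so that $|h_n(z)|\le 2^{-n}e^{2/(1-|z|)^2}$ throughout $B(0,1)$. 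Summing and absorbing $F_0$'s contribution produces $|f(z)|\le Ce^{2/(1-|z|)^2}$ with $C$ depending only on $\a$. Because $s_n\to\infty$, the coefficient of every fixed power of $z$ in $f$ is a finite sum of integers, so $f\in\ZZ$.

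The principal obstacle is reconciling three competing demands: keeping the coefficients integral (delivered by Lemmas \ref{l2} and \ref{l3} together with the stabilization forced by $z^{s_n}$), preserving the previously prescribed $\Q(i)$-values at every algebraic point already processed (delivered by the vanishing product $F_0F_1\cdots F_{n-1}$), and achieving the super-exponential growth bound in the presence of the $(1-|z|)^{-3n-1}$ blow-up that accumulates from the $n$ factors $F_k$ (delivered by a sufficiently large, explicit choice of $s_n$).
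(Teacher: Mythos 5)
Your overall architecture is the paper's: a series whose $n$-th term is a product of the ``vanishing'' functions from Lemma \ref{l2} (so it does not disturb previously treated points) times $z^{s_n}$ times a correction $g_n$ from Lemma \ref{l3} that steers the value at the next point $\b_n$ into $\Q(i)\setminus\{0\}$, with conjugation symmetry handling $\overline{\b}_n$ for free. The gap is in the order of your choices at the correction step, and it breaks the growth estimate. You fix $\mu_n\in\Q(i)\setminus\{0\}$ first and then solve
\[
c_n=\frac{\mu_n-f_{n-1}(\b_n)}{F_0(\b_n)\cdots F_{n-1}(\b_n)\,\b_n^{s_n}},
\]
so $|c_n|$ is of the order of a constant (independent of $s_n$) times $|\b_n|^{-s_n}$. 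The Remark after Lemma \ref{l3} gives a coefficient bound for $g_n$ depending only on the evaluation point $\b_n$ \emph{only when the target value lies in} $B(0,1)$; for a general target the bound grows linearly in $|c_n|$ (see $|b_0|,|c_0|\le K|\beta|+1$ in its proof). Hence your constant $A_n$ in $|h_n(z)|\le A_n|z|^{s_n}/(1-|z|)^{3n+1}$ is not ``depending only on $n$'': it carries a factor of order $|\b_n|^{-s_n}$, so the bound for $|h_n(z)|$ contains $(|z|/|\b_n|)^{s_n}$, which blows up for $|z|>|\b_n|$ as $s_n\to\infty$. Increasing $s_n$ therefore makes your estimate worse, not better, and the supremum you claim tends to $0$ in fact tends to $\infty$.

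The repair is exactly the move the paper makes: do not prescribe the Gaussian rational value in advance. Fix $s_n$ first, note that $w:=F_0(\b_n)\cdots F_{n-1}(\b_n)\,\b_n^{s_n}\ne 0$, and use the density of $\Q(i)$ (of $\Q$ when $\b_n\in\R$) to find $\theta\in B(0,1)$ with $f_{n-1}(\b_n)+\theta w\in\Q(i)\setminus\{0\}$; this is possible because $\{f_{n-1}(\b_n)+\theta w:\theta\in B(0,1)\}$ is an open disc of positive radius. Then apply Lemma \ref{l3} with target $\theta\in B(0,1)$, so the Remark applies and the coefficient bound of $g_n$ depends only on $\b_n$ and not on $s_n$. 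With that single change the circularity disappears, your calculus estimate for $\sup_r r^{s_n}(1-r)^{-(3n+1)}e^{-2/(1-r)^2}$ becomes legitimate, and the rest of your argument (integrality, conjugation, analyticity, the final bound $Ce^{2/(1-|z|)^2}$) matches the paper's proof.
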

\begin{proof}
Write $\QQQ\backslash\{\a,\aa\}=\{\beta_1,\beta_2,\ldots\}\cup \{\overline{\b}_1,\overline{\b}_2,\ldots\}$. By Lemma \ref{l2}, there exist $f_0,f_1,\ldots$ with integer coefficients such that $f_0(z)=0$ for $z\in \QQQ$ if and only if $z\in \{\a,\aa\}$ and $f_j(z)=0$ for $z\in \QQQ$ if and only if $z\in \{\b_j,\overline{\b}_j\}$, for all $j\geq 1$. Now, define $f(z):=\sum_{k\geq 0}z^{t_k}g_k(z)f_0(z)\cdots f_k(z)$, where $(t_k)_k$ will be choose later. The functions $g_k$'s will be chosen inductively according with Lemma \ref{l3} and we will have $|g_k(z)|\leq d_k/(1-|z|)$ (where $d_k$ depends only on $\b_{k+1}$). By the bounds $|f_j(z)|\leq C_j/(1-|z|)^3$ given in the previous lemma, it is enough to choose $t_k=3k^2+\lceil (1-\delta_{0,k})d_kC_0\cdots C_k\rceil $ in order to obtain the analiticity of $f$ in $B(0,1)$. Note that we have $f(\a)=0$. Also, $f(\b_1)=g_0(\b_1)f_0(\b_1)$ and $f_0(\b_1)\neq 0$. Therefore, for some $\gamma\in B(0,1)$, we have that $\gamma f_0(\b_1)\in \Q(i)^*$. Now, by Lemma \ref{l3}, there exists a function $g_0\in \Z\{z\}$ such that $g_0(\b_1)=\gamma$. Thus, for this choice of $g_0$, one has that $f(\b_1)$ is a nonzero Gaussian rational. Assume, from now on, that $\b_j\neq 0$, for $j\geq 2$ and suppose that $g_0,\ldots, g_{N-1}$ were chosen such that $f(\b_j)\in \Q(i)^*$ for all $1\leq j\leq N$. Then,
\begin{eqnarray*}
f(\b_{N+1}) & = & \displaystyle\sum_{k=0}^{N-1}\b_{N+1}^{t_k}g_k(\b_{N+1})f_0(\b_{N+1})\cdots f_k(\b_{N+1})\\
&  & +  \b_{N+1}^{t_N}g_N(\b_{N+1})f_0(\b_{N+1})\cdots f_N(\b_{N+1})\\
&= & \theta_N+\b_{N+1}^{t_N}g_N(\b_{N+1})f_0(\b_{N+1})\cdots f_N(\b_{N+1}).
\end{eqnarray*}
Since $\b_{N+1}^{t_N}f_0(\b_{N+1})\cdots f_N(\b_{N+1})\neq 0$, then there exists a complex number $\theta\in B(0,1)$ such that $\theta_N+\theta\b_{N+1}^{t_N}f_0(\b_{N+1})\cdots f_N(\b_{N+1})$ belongs to $\Q(i)^*$. Then, we choose $g_{N}\in \Z\{z\}$ (by Lemma \ref{l3}) such that $g_N(\b_{N+1})=\theta$. Thus, $f(\b_{N+1})\in \Q(i)^*$ and the construction is complete. 

In order to get the upper bound for $|f(z)|$, note that by the construction and the bounds in the previous lemmas, we have
\[
|f(z)|\leq \frac{D_0}{(1-|z|)^4}+\displaystyle\sum_{k\geq 1}D_k\frac{|z|^{t_k}}{(1-|z|)^{3k+4}},
\]
where $D_k=d_kC_0\cdots C_k$. Since $t_k=\lceil (1-\delta_{0,k})D_k\rceil +3k^2$, we have
\begin{eqnarray*}
|f(z)| &\leq & \frac{D_0}{(1-|z|)^4}+\frac1{(1-|z|)^4}\displaystyle\sum_{k\geq 1}D_k|z|^{D_k}\left(\frac{|z|^{k}}{1-|z|}\right)^{3k}\\
&\leq &  \frac{D_0}{(1-|z|)^4}+\frac1{(1-|z|)^5}\displaystyle\sum_{k\geq 0}\left(\frac{|z|^{k}}{1-|z|}\right)^{3k},
\end{eqnarray*}
since, as in the previous lemma, $D_k|z|^{D_k}\le 1/|\log |z||\le 1/(1-|z|)$. Also, $|z|^k<(1-|z|)/2$ when $k>(\log (1-|z|)-\log 2)/\log |z|=:k(z)$. Thus, 
 
\begin{eqnarray*}
|f(z)|  & \leq & \frac{D_0}{(1-|z|)^4}+\frac{1}{(1-|z|)^5}\left(\displaystyle\sum_{k= 1}^{\lfloor k(z)\rfloor}\left(\frac{|z|^{k}}{1-|z|}\right)^{3k}+\displaystyle\sum_{k\geq k(z)}\left(\frac{|z|^{k}}{1-|z|}\right)^{3k}\right)\\
 & \leq & \frac{D_0}{(1-|z|)^4}+\frac{1}{(1-|z|)^5}\left(\displaystyle\sum_{k= 1}^{\lfloor k(z)\rfloor}\left(\frac{|z|^k}{1-|z|}\right)^{3k}+\displaystyle\sum_{k>k(z)}\left(\frac{1}{2}\right)^{3k}\right)\\
 & \leq & \frac{D_0}{(1-|z|)^4}+\frac{1}{(1-|z|)^5}\left(\displaystyle\sum_{k= 1}^{\lfloor k(z)\rfloor}\left(\frac{|z|^k}{1-|z|}\right)^{3k}+\frac14\right).
\end{eqnarray*}
for all $z\in B(0,1)$. Note now that the maximum value of the function $t(k)=(\frac{|z|^k}{1-|z|})^{3k}$ for real positive values of $k$ is attained at $k=\frac{\log(1-|z|)}{2\log |z|}$, and is equal to $(\frac1{1-|z|})^{3\frac{\log(1-|z|)}{4\log |z|}}=e^{-3\frac{\log^2(1-|z|)}{4\log |z|}}$. This implies that
\[
|f(z)|\le \frac{D_0}{(1-|z|)^4}+\frac{1}{(1-|z|)^5}\left(\lfloor k(z)\rfloor e^{-3\frac{\log^2(1-|z|)}{4\log |z|}}+\frac14\right).
\]
Since $\lfloor k(z)\rfloor \le k(z)=(\log (1-|z|)-\log 2)/\log |z|\le (-\log (1-|z|)+\log 2)/(1-|z|)$, we get
\begin{eqnarray*}
|f(z)| &\le & \frac{D_0}{(1-|z|)^4}+\frac{1}{(1-|z|)^5}\left(\frac{-\log (1-|z|)+\log 2}{1-|z|} e^{-3\frac{\log^2(1-|z|)}{4\log |z|}}+\frac14\right)\\
&\le & \frac{D_0}{(1-|z|)^4}+\frac{1}{(1-|z|)^5}\left(\frac{-\log (1-|z|)+1}{1-|z|} e^{-3\frac{\log^2(1-|z|)}{4\log z}}\right) \\
& = & \frac{D_0}{(1-|z|)^4}+\frac{1}{(1-|z|)^6}\left((-\log (1-|z|)+1) e^{-3\frac{\log^2(1-|z|)}{4\log |z|}}\right)\\
& < & \frac{D_0}{(1-|z|)^4}+\frac{1}{(1-|z|)^7}e^{-3\frac{\log^2(1-|z|)}{4\log |z|}}\leq \frac{D_0}{(1-|z|)^4}+\frac{1}{(1-|z|)^7}e^{3\frac{\log^2(1-|z|)}{4(1-|z|)}}.
\end{eqnarray*}
Note now that the maximum of $x\log^2 x$ for $0<x<1$ is attained at $x=e^{-2}$, and is equal to $4/e^2<4/7$, so $|f(z)|\le \frac{D_0}{(1-|z|)^4}+\frac{1}{(1-|z|)^7}e^{\frac3{7(1-|z|)^2}}$. Also, the maximum of $x/e^{\alpha x^2}$ for $x\ge 1$ is attained at $x=1/\sqrt{2\alpha}$, and is equal to $1/\sqrt{2\alpha e}$, which is equal to $1$ when $\alpha=1/2e$. So we have $x\le e^{x^2/2e}$ for every $x\ge 1$, and thus $x^7\le e^{7x^2/2e},$ for all $x\ge 1$. This implies that
\[
|f(z)|\le \frac{D_0}{(1-|z|)^4}+e^{\frac{7/2e+3/7}{(1-|z|)^2}}<\frac{D_0}{(1-|z|)^4}+e^{\frac2{(1-|z|)^2}}<Ce^{\frac2{(1-|z|)^2}}
\]
(since $e^{2x}-e^{7x/4}=e^{7x/4}(e^{x/4}-1)\ge e^{7x/4}\cdot x/4\ge e^{7/4}/4>1$ for $x\ge 1$). Here $C=D_0+1$ and the result follows.
\end{proof}

The next lemma is of theoretical interest and can be found in \cite[p. 35]{bookmahler}. However, we shall provide a proof here (which is simpler than the Mahler's one) for the convenience of the reader.

\begin{lemma}\label{l5}
Let $g(x)=\sum_{k=0}^{\infty}a_k x^k$ be a power series with positive radius of convergence and rational coefficients. If there is a non-zero polynomial $f(x,y)\in \C[x,y]$ of degree $n$ such that $f(x,g(x))$ is identically $0$ then there is a non-zero polynomial $\tilde f(x,y)\in \Q[x,y]$ of degree at most $n$ such that $\tilde f(x,g(x))$ is identically $0$.
\end{lemma}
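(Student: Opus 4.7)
The plan is to recognize that ``$f(x,g(x))\equiv 0$'' is a homogeneous $\Q$-linear condition on the coefficients of $f$, and then invoke the elementary fact that the dimension of the solution space of a $\Q$-linear system is independent of whether we compute it over $\Q$ or over~$\C$.

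More precisely, first I would parametrize the (finite-dimensional) $\C$-vector space $P_n$ of polynomials of degree at most $n$ in two variables: write a general element as
\[
f(x,y)=\sum_{0\le i+j\le n}c_{ij}\,x^iy^j,
\]
so $\dim_\C P_n=\binom{n+2}{2}$. Next, I would expand
\[
f(x,g(x))=\sum_{0\le i+j\le n}c_{ij}\,x^i g(x)^j,
\]
and read off the coefficient of $x^m$. Since $g$ has rational coefficients, the coefficient $\gamma_{ij,m}$ of $x^m$ in $x^ig(x)^j$ is a (fixed) rational number depending only on $i,j,m$ and on the $a_k$. Hence $f(x,g(x))\equiv 0$ is equivalent to the infinite homogeneous linear system
\[
\sum_{0\le i+j\le n}\gamma_{ij,m}\,c_{ij}=0 \qquad (m=0,1,2,\ldots)
\]
in the finitely many unknowns $c_{ij}$, whose coefficient matrix $M=(\gamma_{ij,m})$ has entries in~$\Q$.

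By hypothesis, this system admits a non-trivial complex solution, i.e.\ $M$ has non-trivial kernel in $\C^{\binom{n+2}{2}}$. But the rank of a matrix with rational entries is the same computed over $\Q$ as over $\C$ (row-reduction stays inside $\Q$), so the kernel of $M$ viewed over $\Q$ is likewise non-trivial. Any non-zero rational vector in this kernel furnishes a non-zero $\tilde f\in \Q[x,y]$ of degree at most $n$ with $\tilde f(x,g(x))\equiv 0$, as required.

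There is essentially no real obstacle here: the whole argument is a one-paragraph linear-algebra observation, and the only content is the recognition that the defining conditions are $\Q$-linear, which uses only the rationality of the $a_k$. The slight cosmetic subtlety, namely that the solution space is defined by infinitely many equations, is harmless because one is working in a fixed finite-dimensional ambient space, so the kernel coincides with the kernel of any finite sub-system of full rank.
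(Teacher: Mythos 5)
Your proof is correct. It rests on the same underlying linear-algebra fact as the paper's --- a homogeneous linear system with rational coefficients that has a nontrivial solution over $\C$ has one over $\Q$ --- but you run the argument on the dual side and in a more streamlined way. You encode the condition $f(x,g(x))\equiv 0$ as the kernel of the infinite rational matrix $M=(\gamma_{ij,m})$ acting on the $\binom{n+2}{2}$ unknowns $c_{ij}$, reduce to a finite subsystem whose rows span the row space (harmless since the ambient space is finite-dimensional), and invoke invariance of rank under field extension. The paper instead works with the column side: it considers the span $E$ of the power series $x^rg(x)^s$ in $\C[[x]]$, truncates via the projections $\pi_N$ onto polynomials of degree at most $N$, observes that $\dim \pi_N(E)$ stabilizes at some $d<\binom{n+2}{2}$, extracts a basis $\pi_{N_0}(x^{r_i}g(x)^{s_i})$, and shows that some remaining monomial $x^rg(x)^s$ is a $\Q$-linear combination of the basis elements uniformly in $N$, yielding the explicit relation $\tilde f(x,y)=x^ry^s-\sum_i c_ix^{r_i}y^{s_i}$. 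The two arguments are dual formulations of the same observation (linear relations among the columns of $M$ are exactly elements of its kernel); yours is shorter and makes the ``rank is a $\Q$-rational invariant'' point explicit, while the paper's version is more constructive, producing a relation that is monic in a distinguished monomial and handling the passage from truncations to the full power series by the stabilization argument rather than by a rank bound. One cosmetic point: when you say the kernel coincides with that of ``any finite sub-system of full rank,'' you should mean a finite set of rows spanning the row space of the whole system (equivalently, of rank equal to $\operatorname{rank} M$), which is what your argument uses.
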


\begin{proof}
Since there is a non-zero polynomial $f(x,y)\in \C[x,y]$ of degree $n$ such that $f(x,g(x))\equiv 0$, the power series $x^r g(x)^s, r, s\in \N, r+s\le n$ are linearly dependent. For every $N\in \N$, let $\pi_N:\C[[x]]\to \C[x]$ be the natural projection on the vector space of polynomials with degree at most $N$ given by $\pi_N(\sum_{k=0}^{\infty}a_k x^k)=\sum_{k=0}^N a_k x^k$. Let $E<\C[[x]]$ be the vector space generated by $x^r g(x)^s, r, s\in \N, r+s\le n$ and, for each $N\in \N$, $E_N:=\pi_N(E)$. We have $0\le \dim E_N\le \dim E\le (n+1)(n+2)/2$ and $\dim E_N \le \dim E_{N+1}$, for every $N\in \N$.
So there are $d, N_0\in \N$ such that $\dim E_N=d$, for all $N\ge N_0$. Let $(r_1,s_1),\dots,(r_d,s_d)$ be such that $r_i, s_i\in \N$, $r_i+s_i\le n,$ for all $i\le d$ and $\pi_{N_0}(x^{r_i} g(x)^{s_i}), 1\le i\le d$ form a basis of $E_{N_0}$. We claim that $\pi_N(x^{r_i} g(x)^{s_i}), 1\le i\le d$ form a basis of $E_N$ for every $N\ge N_0$. Indeed they are linearly independent (since otherwise $\pi_{N_0}(x^{r_i} g(x)^{s_i})=\pi_{N_0}(\pi_N(x^{r_i} g(x)^{s_i})), 1\le i\le d$ would be linearly dependent, a contradiction) and $\dim E_N=d$. 

Since $x^r g(x)^s, r, s\in \N, r+s\le n$ are linearly dependent in $\C[[x]]$, applying $\pi_N$ to a non-trivial linear combination equal to $0$, we conclude that for any $N\in \N$, $\pi_N(x^r g(x)^s), r, s\in \N, r+s\le n$ are linearly dependent in $\C[x]$, so $d<(n+1)(n+2)/2$ and there is $(r,s)$ with $r, s\in \N, r+s\le n$ and $(r,s)\ne (r_i,s_i)$ for $1\le i\le d$. Since $\pi_{N_0}(x^{r_i} g(x)^{s_i}), 1\le i\le d$ form a basis of $E_{N_0}$, and $\pi_N(x^r g(x)^s)\in \Q[x]$, for every $N\in \N, r, s\in \N, r+s\le n$, it follows that there are $c_i\in \Q, 1\le i\le d$, uniquely determined, such that $\pi_{N_0}(x^r g(x)^s)=\sum_{i=1}^d c_i \pi_{N_0}(x^{r_i} g(x)^{s_i})$. This implies that, for every $N\ge N_0$, $\pi_N(x^r g(x)^s)=\sum_{i=1}^d c_i \pi_N(x^{r_i} g(x)^{s_i})$. Indeed, given $N\ge N_0$, there are $\tilde c_i\in \Q, 1\le i\le d$ such that $\pi_N(x^r g(x)^s)=\sum_{i=1}^d \tilde c_i \pi_N(x^{r_i} g(x)^{s_i})$ (since $\pi_N(x^{r_i} g(x)^{s_i}), 1\le i\le d$ form a basis of $E_N$); applying $\pi_{N_0}$, we get $\pi_{N_0}(x^r g(x)^s)=\sum_{i=1}^d \tilde c_i \pi_{N_0}(x^{r_i} g(x)^{s_i})$, and so, by uniqueness, $\tilde c_i=c_i$ for $1\le i\le d$. Now, since $\pi_N(x^r g(x)^s)=\sum_{i=1}^d c_i \pi_N(x^{r_i} g(x)^{s_i})$ for every $N\ge N_0$, it follows that $x^r g(x)^s=\sum_{i=1}^d c_i x^{r_i} g(x)^{s_i}$ in $\C[[x]]$, and so the conclusion holds with $\tilde f(x,y)=x^r y^s-\sum_{i=1}^d c_i x^{r_i} y^{s_i}\in \Q[x,y]$.
\end{proof}

\begin{corollary}\label{cor1}
The set of power series with positive radius of convergence and rational coefficients which are algebraic functions (i.e. such that there is a non-zero polynomial $f(x,y)\in \C[x,y]$ with $f(x,g(x))\equiv 0$) is countable.
\end{corollary}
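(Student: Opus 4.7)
The plan is to combine Lemma \ref{l5} with a standard counting argument. Let $\mathcal{A}$ denote the set in question, i.e.\ the set of all $g(x)=\sum_{k\ge 0}a_kx^k\in\Q[[x]]$ with positive radius of convergence such that there exists a nonzero $F(x,y)\in\C[x,y]$ with $F(x,g(x))\equiv 0$.

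First I would invoke Lemma \ref{l5}: for every $g\in\mathcal{A}$ there is a nonzero polynomial $\tilde f_g(x,y)\in\Q[x,y]$ with $\tilde f_g(x,g(x))\equiv 0$. This replaces the uncountable parameter space $\C[x,y]$ by the countable parameter space $\Q[x,y]$, which is the whole point of Lemma \ref{l5} in this context.

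Next I would define a map $\Phi:\mathcal{A}\to \Q[x,y]\setminus\{0\}$ by choosing, for each $g$, one such witness $\tilde f_g$. The set $\Q[x,y]\setminus\{0\}$ is countable, so it suffices to show that $\Phi^{-1}(\tilde f)$ is finite for every nonzero $\tilde f\in\Q[x,y]$. For this, fix a nonzero $\tilde f(x,y)\in\Q[x,y]$ and let $d=\deg_y\tilde f\ge 0$. If $d=0$ then $\tilde f(x,g(x))\equiv 0$ forces $\tilde f\equiv 0$, contradiction, so $d\ge 1$. Viewing $\tilde f$ as a nonzero polynomial in $y$ over the integral domain $\Q[x]\subset\C[[x]]$ (which embeds in the field of fractions $\C((x))$), the equation $\tilde f(x,y)=0$ has at most $d$ solutions $y\in\C((x))$, and hence at most $d$ solutions $g(x)\in\C[[x]]$. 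Therefore $\#\Phi^{-1}(\tilde f)\le d<\infty$.

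Consequently $\mathcal{A}=\bigcup_{\tilde f\in\Q[x,y]\setminus\{0\}}\Phi^{-1}(\tilde f)$ is a countable union of finite sets, hence countable, as claimed. The only nontrivial ingredient is Lemma \ref{l5}; once we can descend from $\C$-coefficients to $\Q$-coefficients, the rest is a routine counting argument, so I do not anticipate a substantive obstacle.
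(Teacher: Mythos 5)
Your proposal is correct and follows essentially the same route as the paper: apply Lemma \ref{l5} to descend to a witness in the countable set $\Q[x,y]\setminus\{0\}$, then observe that each fixed nonzero polynomial admits only finitely many power series solutions. The only (cosmetic) difference is in justifying that last finiteness: the paper appeals to the fact that an algebraic curve has finitely many branches at $x=0$, while you count roots of a degree-$d$ polynomial over the field $\C((x))$ directly, which is a perfectly valid and arguably more self-contained argument.
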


Indeed, by the previous lemma, given such an algebraic power series, there is a non-zero polynomial $\tilde f(x,y)\in \Q[x,y]$ with $\tilde f(x,g(x))\equiv 0$, which implies the corollary since $\Q[x,y]$ is countable and any algebraic curve defined by a polynomial $\tilde f(x,y)\in \C[x,y]\setminus\{0\}$ has only a finite number of branches at $x=0$, i.e., there are only a finite number of power series $g(x)=\sum_{k=0}^{\infty}a_k x^k$ with positive radius of convergence (and complex coefficients) such that $\tilde f(x,g(x))\equiv 0$.

\section{The proof of the theorem}
Let $S=\{\a_1,\a_2,\ldots\}\cup \{\overline{\a_1},\overline{\a_2},\ldots\}\subseteq \QQ\cap B(0,1)$ and let $\QQ\cap B(0,1)\backslash S=\{\b_1,\b_2,\ldots\}$. For all $j\geq 1$, let $f_j$ as in Lemma \ref{l4}, i.e., $f_j(\QQQ)\subseteq \Q(i)$ and $f_j(z)=0$ for $z\in \QQQ$ if and only if $z\in \{\a_j,\aa_j\}$. Also, there exists a constant $C_j$ (depending on $\a_j$) such that $|f_j(z)|\leq C_je^{\frac2{(1-|z|)^2}}$. Set $e_k:=\lceil C_1\cdots C_k\rceil$. Define
\[
x_n:=\max_{1\leq i\leq n}\max_{1\leq k\leq n}\{H(f_1(\b_i)\cdots f_k(\b_i))\},
\]
where, as usual, the \textit{height} of an algebraic number $\b$, $H(\b)$, denotes the maximum of the absolute values of the coefficients of the minimal polynomial (over $\Z$) of $\b$ (If $\QQ\cap B(0,1)\backslash S=\emptyset$, then define $x_n=1,\ \forall n$, also if $\QQ\cap B(0,1)\backslash S$ is finite with $k$ elements, then define $\b_j=\b_k,\ \forall j>k$. Analogously, if $S$ is a finite set with $n$ elements, we can take $f_j(z)=f_1(z)$, for all $j>n$). 

Now, we define the sequence $(s_n)_n$ by $s_1=1$ and with two possible choices for $s_{n+1}$, namely, $s_{n+1}\in \{v_n, v_n+1\}$, where $v_n=n^n\max\{s_n,\lceil \log x_n\rceil, e_n\}$, for $n>1$. We claim that the function $f(z):=\sum_{k\geq 1}z^{s_k}f_1(z)\cdots f_k(z)$ belongs to $\Z\{z\}$ and has $S$ as its exceptional set. First, note that this function is analytic since for all $z$ belonging to the open ball $B(0,R)$, with $0<R<1$, one has that (by the estimates for $|f_j(z)|$)
\begin{eqnarray*}
|f(z)|& \leq & \displaystyle\sum_{k\geq 1} |z|^{s_k} e_ke^{2k/(1-|z|)^2}< \displaystyle\sum_{k\geq 1} R^{s_k} e_ke^{2k/(1-R)^2}.
\end{eqnarray*}
Since $s_k>k+e_k$ for all $k$ sufficiently large, we have $(R^{k+e_k} e_ke^{2/(1-R)^2})^k<1/2^k$, for all $k$ sufficiently large yielding that $f$ is an analytic function in $B(0,1)$, since the series which defines $f(z)$ converges uniformly in any of these balls.

For proving that $S_f=S$, first, let $\a\in S$, then $\a=\a_j$, for some $j$ and then $f(\a)=f(\a_j)=\sum_{k=1}^{j-1}\a_j^{s_k}f_1(\a_j)\cdots f_k(\a_j)\in \Q(\a_j,i)\subseteq \QQ$. Thus $\a\in S_f$. If $S=\QQ\cap B(0,1)$ the proof is complete. Otherwise, it remains us to prove that $f(\b)$ is a transcendental number, for all $\b\in \QQ\cap B(0,1)\backslash S$ . Suppose, towards a contradiction, that $f(\b)$ is an algebraic number of degree $t$ and that $\b=\b_j$ has degree $r$. Define $\gamma_N:=\sum_{k= 1}^{N}\b^{s_k}f_1(\b)\cdots f_k(\b)$. Note that $\g_N$ is an algebraic number of degree at most $2r$. Also, $\g_N\neq \g_{N+1}$ for all $N\geq 1$ (otherwise, $\b^{s_N}f_1(\b)\cdots f_N(\b)$ would be zero which contradicts the choice of $f_j$'s). Then, $f(\b)\neq \g_N$, for infinitely many integers $N$. Now, we shall use the following kind of {\it Liouville's inequality} given by Bombieri \cite{bomb}:
\begin{lemma}\label{bomb}
Let $\alpha_1,\a_2$ be distinct algebraic numbers of degrees $n_1$ and $n_2$, respectively. Then
\[
|\a_1-\a_2|>(4n_1n_2)^{-3n_1n_2}H(\a_1)^{-n_2}H(\a_2)^{-n_1}.
\]
\end{lemma}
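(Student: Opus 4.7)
The plan is to derive this refined Liouville inequality from the resultant of the minimal polynomials of $\alpha_1$ and $\alpha_2$. Write $P_j(x)=a_j\prod_{k=1}^{n_j}(x-\alpha_j^{(k)})\in\Z[x]$ for the minimal polynomial of $\alpha_j$, with $\alpha_j=\alpha_j^{(1)}$. I would first handle the generic case where $P_1$ and $P_2$ are coprime; the degenerate case (in which $\alpha_1$ and $\alpha_2$ are Galois conjugates, so $P_1=\pm P_2$) is handled in a parallel way, replacing the resultant by the discriminant of the common minimal polynomial.

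In the coprime case $\mathrm{Res}(P_1,P_2)$ is a nonzero integer, so its absolute value is at least $1$. Using the standard factorization
\[
\mathrm{Res}(P_1,P_2)=a_1^{n_2}a_2^{n_1}\prod_{k=1}^{n_1}\prod_{\ell=1}^{n_2}(\alpha_1^{(k)}-\alpha_2^{(\ell)}),
\]
one isolates the single factor $\alpha_1-\alpha_2$ on the right (coming from $(k,\ell)=(1,1)$) and rearranges to get a lower bound for $|\alpha_1-\alpha_2|$ in terms of the remaining $n_1n_2-1$ differences. The task is then reduced to producing a good upper bound on $\prod_{(k,\ell)\neq(1,1)}|\alpha_1^{(k)}-\alpha_2^{(\ell)}|$.

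For this I would apply the elementary estimate $|a-b|\le 2\max(1,|a|)\max(1,|b|)$ to each factor; after regrouping, the bound is naturally expressed through the Mahler measures $M(P_j)=|a_j|\prod_k\max(1,|\alpha_j^{(k)}|)$. Landau's inequality $M(P_j)\le \sqrt{n_j+1}\,H(\alpha_j)$ then converts this into a bound of the shape $c(n_1,n_2)\,H(\alpha_1)^{n_2}H(\alpha_2)^{n_1}$. The main obstacle is purely numerical bookkeeping: combining the $2^{n_1n_2}$ from the elementary estimate, the $(n_1+1)^{n_2/2}(n_2+1)^{n_1/2}$ from Landau, and the factor recovered by removing the $(1,1)$ term must all be absorbed into a single constant of the form $(4n_1n_2)^{3n_1n_2}$; the exponent $3$ provides enough slack to swallow these terms comfortably, so no individual step is deep, but the arithmetic needs to be carried out carefully to make the stated clean inequality fall out.
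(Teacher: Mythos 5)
Your proposal is correct, but the comparison here is somewhat degenerate: the paper does not prove this lemma at all -- it is imported as a black box with a citation to Bombieri's 1958 note, so you have supplied a proof where the authors supply none. Your argument is the standard resultant/discriminant one and it goes through: in the coprime case $|\mathrm{Res}(P_1,P_2)|\ge 1$, and after removing the $(1,1)$ factor the bound $|a-b|\le 2\max(1,|a|)\max(1,|b|)$ gives
\[
|a_1|^{n_2}|a_2|^{n_1}\prod_{(k,\ell)\ne(1,1)}|\alpha_1^{(k)}-\alpha_2^{(\ell)}|\le 2^{n_1n_2-1}M(P_1)^{n_2}M(P_2)^{n_1}
\]
(the regrouping is legitimate because every factor $\max(1,|\cdot|)$ is at least $1$, so reinstating the omitted $(1,1)$ term only enlarges the right-hand side), and Landau's $M(P_j)\le\sqrt{n_j+1}\,H(\alpha_j)$ together with $n_j+1\le 2^{n_j}$ shows the accumulated constant is at most $2^{2n_1n_2-1}\le(4n_1n_2)^{n_1n_2}$, comfortably inside $(4n_1n_2)^{3n_1n_2}$ as you predicted. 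Two small points deserve explicit mention in a written-up version: in the conjugate case one has $n_1=n_2=n$ and $H(\alpha_1)=H(\alpha_2)=H$, the discriminant bound naturally produces the exponent $-(n-1)$ on $H$ rather than $-2n$, and you need $H\ge 1$ to pass from one to the other; and the nonvanishing of the discriminant rests on the separability of the minimal polynomial, which should be said. With those remarks your sketch is a complete and correct elementary proof of the inequality the paper merely quotes.
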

We point out the existence of more accurated results about this kind of inequality (see, for instance, \cite[Hilfssatz 5]{G} and \cite[Corollary A.2]{bugeaud}), but we choose this one since it is enough for our purposes (also the implied contants in Bombieri's result are simpler).

For our case, take $N>j$, so, we have that
\begin{equation}\label{a1}
|f(\b)-\g_N|\gg H(f(\b))^{-2r}H(\g_N)^{-t}\gg H(\g_N)^{-t},
\end{equation}
where the implied constant depends only in $r, t$ and $f(\b)$.

On the other hand, 
\[
f(\b)-\g_N=\sum_{k\geq N+1}\b^{s_k}f_1(\b)\cdots f_k(\b)=\sum_{k\geq N+1}a_k\b^{s_k}.
\]
Now, let $c$ be a real number such that $0<|\b|<1/c<1$. Since the function $\sum_{k\geq 1}a_kz^{s_k}$ is analytic in the unit ball, then there exists a constant $c_2$ such that $|a_k(1/c)^{s_k}|\leq c_2$ and so $|a_k|\leq c_2c^{s_k}$, for all $k\geq 1$. Thus, 
\begin{equation}\label{a2}
|f(\b)-\g_N|\leq \sum_{k\geq N+1}|a_k||\b|^{s_k}\leq \sum_{k\geq N+1}c_2|c\b|^{s_k}\ll |c\b|^{s_{N+1}},
\end{equation}
where the implied constant does not depend on $N$.

Now, we need an upper bound for $H(\g_N)$. For that we need explicit upper bounds for $H(y^n), H(xy)$ and $H(y_1+\cdots+y_k)$ (these inequalities are known, but let us derive them here for the sake of completeness). Let $\a$ be a $d$-degree algebraic number, then there exists the relation 
\[
\frac{1}{d}\log H(\a)-\log 2\leq h(\a)\leq \frac{1}{d}\log H(\a)+\frac{1}{2d}\log (d+1),
\]
(this inequality can be seen in \cite[Lemma 3.11]{wal2}) where the \textit{logarithmic height} of an $s$-degree algebraic number $\g$ is defined as
\[
h(\g)=\dfrac{1}{s}(\log |a|+\displaystyle\sum_{j=1}^s\log \max\{1,|\g^{(j)}|\}),
\]
where $a$ is the leading coefficient of the minimal polynomial of $\gamma$ (over $\mathbb{Z}$) and $(\g^{(j)})_{1\leq j\leq s}$ are the conjugates of $\g$ (over $\Q$). 

Let $y_1,\ldots, y_k$ be algebraic numbers of degree $m_1,\ldots, m_k$, respectively. Since it is well-known that $h(y_1^n)=nh(y_1)$ (for a positive integer $n$), $ h(y_1y_2)\leq h(y_1)+h(y_2)$ and $h(y_1+y_2)\leq h(y_1)+h(y_2)+\log 2$ (see \cite[p. 75]{wal2}), then we got the following three relations
\begin{itemize}
\item $H(y_1^n)\leq e^{O(n)}H(y_1)^{n}$;
\item $H(y_1y_2)\leq e^{O(1)}(H(y_1)H(y_2))^{m_1m_2}$;
\item $H(y_1+\cdots +y_k)\leq e^{O(k)}(H(y_1)\cdots H(y_k))^{m_1\cdots m_k}$.
\end{itemize}
Here, the implied constants depend only on the degree of the algebraic numbers.

We point out that it is possible to obtain better upper bounds (see for example lemmas A.3 and A.4 in \cite{bugeaud}), but these ones are enough for our proof.

Since $\g_N$ is a finite sum, we can apply these inequalities and after some calculations, we get
\begin{eqnarray*}
H(\g_N) &\leq & e^{O(N)}(H(\b^{s_1}f_1(\b))\cdots H(\b^{s_N}f_1(\b)\cdots f_N(\b)))^{(2r)^N}\\
&\leq & e^{O(N)}(e^{O(N)}H(\b^{s_1})\cdots H(\b^{s_N})x_N^{N})^{(2r)^{N+1}}\\
&\leq &e^{O(N s_N(2r)^{N+1})}x_N^{N\cdot (2r)^{N+1}}
\end{eqnarray*}
for a sufficiently large $N>j$. Then, by (\ref{a1}), we have
\begin{equation}\label{a3}
|f(\b)-\g_N|\gg e^{-O(tNs_N(2r)^N)}x_N^{-tN(2r)^{N+1}}.
\end{equation}
By combining (\ref{a2}) and (\ref{a3}), we obtain
\[
|c\b|^{s_{N+1}}\gg e^{-O(tN s_N(2r)^{N+1})}x_N^{-Nt(2r)^{N+1}}.
\]
After some calculations, we arrive at
\[
\dfrac{s_{N+1}}{N(2r)^{N+1}\max\{s_N,\log x_N\}}\ll\dfrac{-t}{\log |c\b|}.
\]
However, by the definition of $s_{N+1}$, the left-hand side above tends to infinity as $N\to \infty$, contradicting the inequality. In conclusion, $f(\b)$ is a transcendental number and so $S_f=S$ as desired.

The proof that we can choose $f$ to be transcendental follows because there is a
binary tree of different possibilities for $f$. In fact, if we have chosen $s_1,\ldots, s_{n-1}$, then different
choices of $s_n$ (we have two choices in each step) give different values of $f(\alpha_{n+1})$, which does not depend on the values
of $s_k$, for $k > n$, and so different functions $f$. Thus, we have constructed uncountably
many possible functions, and by the Corollary \ref{cor1}, the set of the algebraic functions $f\in \Z\{z\}$ is a countable set. The proof is then complete.
\qed

\begin{remark}
Note that by using the same ideas as in the proof of Lemma \ref{l4} and in the proof of the transcendence of functions in the theorem, we obtain the following version of St\"{a}ckel's theorem for functions in $\Z\{z\}$ (we point out that there is no any information about the coefficients of the functions in the St\"{a}ckel original construction):
let $\Sigma\subseteq B(0,1)$ be a countable set and let $T$ be a dense subset of $\C$. Suppose that if $0\in \Sigma$, then $T\cap \Z\neq \emptyset$ (this condition is only because $f(0)$ must be an integer number). Then there exists a transcendental
function $f\in \Z\{z\}$ such that $f(\Sigma) \subseteq T$
\end{remark}


\section*{Acknowledgement}
The authors were supported by CNPq-Brazil. Part of this work was done during two very enjoyable visits of the first author to Universit\' e de Bordeaux and to IMPA (Rio de Janeiro). He thanks Yuri Bilu for nice discussions on this problem which lead to the proof of Lemma \ref{l3}. Also, he thanks Manjul Bhargava for discussions on some Mahler's problems during the Heildelberg Laureate Forum 2014. 




\end{document}